\newtheorem{theorem}{Theorem}[section]
\newtheorem{corollary}[theorem]{Corollary}
\theoremstyle {definition}
\numberwithin{equation}{section}
\renewcommand{\geq}{\geqslant}
\renewcommand{\leq}{\leqslant}
\title{Uniformly nonsquare Banach spaces have the fixed point property 2}
\author[Tim Dalby]{Tim Dalby}
\date{\today}
\keywords{fixed point property, uniformly nonsquare, James constant}
\subjclass[2010]{46B10, 47H09, 47H10}
\email{tim\_dalby@bigpond.com}
\begin{document}

\parindent = 0pt
\parskip = 8pt

\begin{abstract}

Another proof that uniformly nonsquare Banach spaces have the fixed point property is presented.

\end{abstract}

\maketitle

\section{Introduction}

In [5] Garc\'{i}a-Falset, Llorens-Fuster and Mazcu\~{n}an-Navarroa were the first to show that a uniformly nonsquare Banach space has the Fixed Point Property, FPP.  The proof is not direct because it travels via the modulus of smoothness to the modulus of nearly uniform smoothness to the coefficient $RW(a, X).$  For more information about $RW(a, X)$ see [1] or [2].

There is now a very direct proof of uniformly nonsquare implying the FPP courtesy of Dowling, Randrianantoanina and Turett, [3].

In [4] $J(X)$ was defined and used in the discussion about the unit sphere in some well known Banach spaces.  $J(X)$ is known as the James constant and is also called the Gau-Lau coefficient, $G(X)$. It is defined by
\[ J(X) = \sup \{ \| x + y \| \wedge \| x - y \| : x, y \in S_X \}. \]
A standard argument can be used to show 
\[J(X)  = \sup \{ \| x + y \| \wedge \| x - y \| : x, y \in B_X \}. \]
In that paper, it was demonstrated that $X$ is uniformly nonsquare if and only if  $J(X) < 2$ and the bounds on $J(X)$ are $\sqrt{2} \leq J(X) \leq 2.$

In [1] it was shown that if  $J(X) < 2$  then $X$ has the FPP.  The proof used the property that $RW(a, X) \leq J(X)$ and so is also indirect.

This paper gives a direct proof that $J(X) < 2$ implies $X$ has the FPP and thus shows a connection between the geometry of the unit sphere and the FPP.  The proof adapts the ideas and techniques that are found in [3].

\section{Results}

\begin{theorem}
Let $X$ be a Banach space where the dual unit ball, $B_{X^*},$ is weak*-sequentially compact.  If $X$ does not have the FPP then $J(X) = 2.$
\end{theorem}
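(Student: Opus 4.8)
The plan is to assume that $X$ fails the FPP and to construct, for every $\varepsilon > 0$, a pair of points witnessing $J(X) > 2 - \varepsilon$, so that $J(X) = 2$; this is exactly the stated theorem, and it is the contrapositive of the advertised implication $J(X) < 2 \Rightarrow \text{FPP}$. Since $X$ fails the FPP there is a weakly compact convex set $C \subseteq X$ and a nonexpansive map $T \colon C \to C$ without a fixed point; by Zorn's lemma I take $C$ minimal among the nonempty closed convex $T$-invariant subsets, and after rescaling I assume $\operatorname{diam} C = 1$. The Goebel--Karlovitz lemma then supplies an approximate fixed point sequence $(x_n)$ in $C$, meaning $\|x_n - T x_n\| \to 0$, with the diametral property $\lim_n \|x_n - x\| = 1$ for every $x \in C$. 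Using weak compactness of $C$ I pass to a subsequence with $x_n \rightharpoonup z$, and after translating $C$ I arrange $z = 0$; then $0 \in C$ and $\|x_n\| \to 1$.

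Next I bring in the dual. For each $n$ I choose $f_n \in S_{X^*}$ with $f_n(x_n) = \|x_n\|$, and invoke the weak*-sequential compactness of $B_{X^*}$ to pass to a further subsequence with $f_n \xrightarrow{w^*} f$ for some $f \in B_{X^*}$. Since $x_n \rightharpoonup 0$, these functionals record the asymptotic geometry: $f_n(x_m) \to 0$ as $m \to \infty$ for fixed $n$, while $f_n(x_n) \to 1$. The target is then to manufacture $u, v \in B_X$ with $\|u+v\| \wedge \|u-v\| > 2 - \varepsilon$, which by the description of $J(X)$ recalled in the introduction forces $J(X) = 2$.

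The key step, and the one I expect to be the main obstacle, is the construction of such a pair. A single weakly null approximate fixed point sequence cannot by itself produce the square configuration: in a Hilbert space such a sequence behaves like an orthonormal system, for which $\|x_n \pm x_m\| \to \sqrt{2}$ and $J = \sqrt{2} < 2$, so the nonexpansiveness of $T$ and the minimality of $C$ must be used in an essential way, exactly as in [3]. Concretely, I would exploit the diametral property together with a Lin-type strengthening of the Goebel--Karlovitz lemma to locate, alongside $(x_n)$, a second configuration placed asymptotically ``opposite'' to it, so that suitable normalized combinations $u$ and $v$ satisfy $\|u - v\| \to 2$ from the diametral distances and $\|u + v\| \to 2$ from a norming functional selected via the weak*-limit $f$.

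Finally I would verify the two lower bounds separately. The bound on one of $\|u \pm v\|$ comes from testing against the norming functionals $f_n$ (here the weak*-sequential compactness of $B_{X^*}$ is indispensable, as it is what makes the limiting functional $f$ available), while the bound on the other comes from the diametral estimate $\lim_n \|x_n - x\| = 1$ applied to the relevant points of $C$; letting $\varepsilon \to 0$ then gives $J(X) = 2$. The delicate point throughout is keeping both estimates simultaneously near $2$, since tightening one combination typically relaxes the other, and reconciling them is precisely where the fixed-point-free nonexpansive structure, rather than mere weak nullity, has to be invoked.
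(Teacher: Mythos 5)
Your overall frame is right---argue the contrapositive, extract an approximate fixed point sequence from a minimal invariant set, pick norming functionals, and use weak*-sequential compactness of $B_{X^*}$ to pass to a weak* limit---and you correctly diagnose the central obstacle: a single weakly null approximate fixed point sequence only yields $\sqrt{2}$ (as in your Hilbert space remark), so something beyond the Goebel--Karlovitz lemma is needed. But that is exactly the step your proposal does not carry out. The sentence ``I would exploit the diametral property together with a Lin-type strengthening \dots to locate a second configuration placed asymptotically opposite'' is a placeholder for the entire content of the proof; no pair $u, v \in B_X$ is ever produced, and neither of the two lower bounds is actually established. As it stands the argument has a genuine gap precisely at the point you yourself flag as delicate.

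What fills the gap in the paper is the Dowling--Randrianantoanina--Turett construction from [3]: using Lin's lemma and a passage to $l_\infty(X)/c_0(X)$ and back, one obtains for each $\epsilon > 0$ a sequence $(y_n)$ with $y_n \rightharpoonup y$, $\|y\| \leq \tfrac{1}{2}$, $\|y_n\| > 1 - \epsilon$, and $\|y_m - y_n\| \leq \tfrac{1}{2} + \epsilon$ for large $m, n$. The square is then built from $u = (y_m - y_n)/(\tfrac{1}{2} + \epsilon)$ and $v = y/(\tfrac{1}{2} + \epsilon)$: weak lower semicontinuity gives $\|(y_m - y_n) + y\| \geq \|y_m\| > 1 - \epsilon$, while the norming functionals $y_n^*$ (via their weak* limit, which is where the hypothesis on $B_{X^*}$ enters) give $\|(y_m - y_n) - y\| \geq 1 - 2\epsilon$, whence $J(X) \geq (1 - 2\epsilon)/(\tfrac{1}{2} + \epsilon) \to 2$. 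Note that the nonvanishing of the weak limit is essential: if $y = 0$ then $\|(y_m - y_n) + y\| \leq \tfrac{1}{2} + \epsilon$ and the estimate collapses. So your step of translating the set so that the weak limit is $0$ points away from the construction you need; the ``second configuration placed asymptotically opposite'' is nothing more exotic than the weak limit itself, once the sequence has been arranged to have norms near $1$ but mutual distances near $\tfrac{1}{2}$.
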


\begin{proof} 
Let $X$ be a Banach space that does not have the FPP and where the dual unit ball, $B_{X^*},$ is weak*-sequentially compact.

Using results from Goebel [6], Karlovitz [7] and Lin [8] plus an excursion to $l_\infty(X)/c_0(X)$ and then back to $X$ it can be shown that the following can be assumed.
 
For $\epsilon > 0$ there is a sequence $(y_n)$ such that $ y_n \rightharpoonup y, \| y \| \leq \frac{1}{2}, \lim_{n \rightarrow \infty} \| y_n \| > 1 - \epsilon$ and 
$\lim_{n \rightarrow \infty} \lim_{m \rightarrow \infty} \| y_n - y_m \| \leq \frac{1}{2}.$ 

Without loss of generality, assume that $\| y_n \| > 1 - \epsilon$ for all $n$ and for large $m$ and $n, \| y_m - y_n \| \leq \frac{1}{2}+ \epsilon.$

For more background to these inequalities and to see some new ones, please refer to [3].

Using the weak lower semicontinuity of the norm, for all $m > 0, $
\[ \liminf_{n \rightarrow \infty} \| (y_m - y_n) + y \| \geq \| y_m \| > 1 - \epsilon. \]
So $\liminf_{m \rightarrow \infty} \liminf_{n \rightarrow \infty} \| (y_m - y_n) + y \| \geq 1 - \epsilon.$

By taking a subsequence, if necessary, assume $\| (y_m - y_n) + y \| \geq 1 - \epsilon \mbox{ for large } m, n.$

Consider $y_n^* \in S_{X^*}$ such that $y_n^*(y_n) = \| y_n \| \mbox{ for all } n.$  Because $B_{X^*}$ is w*-sequentially compact, we may assume $y_n^*\stackrel{*}{\rightharpoonup} y^*$ where  $\| y^* \| \leq 1.$

Again, for all $m > 0,$
\begin{align*}
\liminf_{n \rightarrow \infty} \| (y_m - y_n) - y \| & \geq \liminf_{n \rightarrow \infty} (-y_n^*)((y_m - y_n) - y)) \\
& = \liminf_{n \rightarrow \infty} y_n^*(y_n) - y^*(y_m - y) \\
& = \liminf_{n \rightarrow \infty} \| y_n \| - y^*(y_m - y).
\end{align*}
Since $y_m - y \rightharpoonup 0,$ we have for large $m, -\epsilon \leq y^*(y_m - y) \leq \epsilon.$

Therefore, for large $m$ and $n$,
\[ \| (y_m - y_n) - y \| \geq 1 - \epsilon - \epsilon = 1 - 2 \epsilon. \]
Because $\dfrac{ y_m - y_n}{ \frac{1}{2} + \epsilon}, \dfrac{ y}{\frac{1}{2} + \epsilon} \in B_{X},$

\begin{align*}
2 &\geq J(X) \geq \left \| \frac{y_m - y_n}{\frac{1}{2} + \epsilon} + \frac{y}{\frac{1}{2} + \epsilon} \right \| \wedge \left \| \frac{y_m - y_n}{\frac{1}{2} + \epsilon} -  \frac{y}{\frac{1}{2} + \epsilon}\right \|\\
& \geq \frac{1}{\frac{1}{2} + \epsilon} \Big ( \| (y_m - y_n) + y \| \wedge \| (y_m - y_n) - y \| \Big ) \\
& \geq  \frac{1}{\frac{1}{2} +  \epsilon} \Big ( (1 - \epsilon) \wedge (1 - 2\epsilon) \Big )\\
&= \frac{1 - 2\epsilon}{\frac{1}{2} + \epsilon}. 
\end{align*}

Taking $\epsilon \rightarrow 0, 2 \geq J(X) \geq 2.$  So $J(X) = 2.$.

\end{proof}

\begin{corollary}
If $X$ is uniformly nonsquare then $X$ has the FPP.
\end{corollary}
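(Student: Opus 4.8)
The plan is to deduce the corollary from the Theorem by contraposition, after supplying the single hypothesis that the Theorem requires. First I would recall the characterization quoted in the introduction from [4]: $X$ is uniformly nonsquare if and only if $J(X) < 2$. Assuming $X$ is uniformly nonsquare therefore gives $J(X) < 2$, so in particular $J(X) \neq 2$.

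The Theorem, read contrapositively, states that if $B_{X^*}$ is weak*-sequentially compact and $J(X) \neq 2$, then $X$ has the FPP. Hence the only gap to close is the verification that a uniformly nonsquare space has a weak*-sequentially compact dual unit ball. For this I would invoke the classical theorem of James that every uniformly nonsquare Banach space is reflexive (in fact superreflexive).

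Once reflexivity of $X$ is established, the weak*-sequential compactness of $B_{X^*}$ follows routinely. Reflexivity of $X$ yields reflexivity of $X^*$, so by the Eberlein--\v{S}mulian theorem the closed unit ball $B_{X^*}$ is weakly sequentially compact; and since the weak and weak* topologies of $X^*$ agree when $X$ is reflexive, $B_{X^*}$ is weak*-sequentially compact. Combining $J(X) < 2$ with this compactness, the Theorem applies and delivers the FPP for $X$.

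The step I expect to be the crux is the appeal to James' reflexivity theorem, since everything else is either quoted directly from the introduction or is a standard consequence of reflexivity. Were one to insist on a self-contained argument, the main obstacle would be proving directly that $J(X) < 2$ forces reflexivity, which is itself a nontrivial piece of Banach space geometry.
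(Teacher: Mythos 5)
Your argument is correct and follows the same route as the paper: invoke James' theorem that uniform nonsquareness implies (super)reflexivity, deduce that $B_{X^*}$ is weak*-sequentially compact, and then apply the Theorem together with the characterization $J(X)<2$ from [4]. The paper's own proof is just a terser version of exactly this, leaving the Eberlein--\v{S}mulian details and the contrapositive reading of the Theorem implicit.
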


\begin{proof}
Any uniformly nonsquare Banach space is superreflexive and hence reflexive.  This means  $B_{X^*}$ is weak*-sequentially compact.

\end{proof}


\begin{thebibliography}{99}

\bibitem {1} T. Dalby, {\it Properties of $R(X), R(a,X)$ and $RW(a,X)$}, arXiv:2005.08492 (2020).

\bibitem {2} T. Dom\'{i}nguez-Benavides, {\it A geometric coefficient implying the fixed point property and stability results}, Houston J. Math. {\bf 22} (1996), 835-849.

\bibitem {3} P. N. Dowling, B. Randrianantoanina and B. Turett, {\it The fixed point property via dual space properties}, J. Funct. Anal. {\bf 255} (2008), 768-775.

\bibitem {4} J. Gao and K. S. Lau, {\it On the geometry of spheres in normed linear spaces}, J. Aust. Math. Soc. {\bf 48} (1990), 101-112.

\bibitem {5} J. Garc\'{i}a-Falset, E. Llorens-Fuster and E. M. Mazcu\~{n}an-Navarroa, {\it Uniformly nonsquare Banach spaces have the fixed point property for nonexpansive mappings}, J. Funct. Anal. {\bf 233} (2006), 494-534.

\bibitem {6} K. Goebel, {\it On the structure of minimal invariant sets for nonexpansive mappings}, Ann. Univ. Mariae Curie-Sk\l odowska Sect. A {\bf 29} (1975), 73-77.

\bibitem {7} L. A. Karlovitz, {\it Existence of fixed points for nonexpansive map in a space without normal structure}, Pacific J. Math. {\bf 66} (1976), 153-159.

\bibitem {8} P.-K. Lin, {\it Unconditional bases and fixed points of nonexpansive mappings}, Pacific J. Math. {\bf 116} (1985), 69-76.

\end{thebibliography}
\end{document}